\DeclareMathOperator{\supp}{supp}
\newtheorem{theorem}{Theorem}
\newtheorem{corollary}{Corollary}
\newtheorem{proposition}{Proposition}
\newtheorem{lemma}{Lemma}
\theoremstyle{remark}
\newtheorem{assumption}{Assumption}
\begin{document}

\title{Typical points and families of expanding interval mappings}
\author{Tomas Persson} \address{Centre for Mathematical Sciences, Lund
  University, Box 118, 221 00 Lund, Sweden}
\email{tomasp@maths.lth.se} \subjclass[2010]{37E05, 37A05, 37D20}
\urladdr{http://www.maths.lth.se/~tomasp/}
\date{\today}
\thanks{The author thanks D.~Schnellmann for useful comments.}

\begin{abstract}
  We study parametrised families of piecewise expanding interval
  mappings $T_a \colon [0,1] \to [0,1]$ with absolutely continuous
  invariant measures $\mu_a$ and give sufficient conditions for a
  point $X(a)$ to be typical with respect to $(T_a, \mu_a)$ for almost
  all parameters $a$. This is similar to a result by D.~Schnellmann,
  but with different assumptions.
\end{abstract}

\maketitle

\section{Introduction}

Assume that $I$ is an interval of parameters, and that for any $a \in
I$ we have a mapping $T_a \colon [-1,1] \to [-1,1]$ that is piecewise
expanding in a smooth and uniform way, and that $T_a$ depends on $a$
in a smooth way. It is a well known and classical result that such
mappings have invariant measures that are absolutely continuous with
respect to Lebesgue measure.

In \cite{schnellmann}, Schnellmann studied a class of such mappings
together with a point $X (a)$ in the domain of $T_a$. Under some
conditions on the family of mappings and on the function $a \mapsto
X(a)$, he proved that for almost all $a \in I$, the point $X (a)$ is
typical with respect to an invariant measure of $T_a$ that is
absolutely continuous with respect to Lebesgue measure. We say that a
point $x$ is typical with respect to $(T, \mu)$ if the sequence of
measures $\frac{1}{n} \sum_{k=0}^{n-1} \delta_{T_a^k (x)}$ converges
weakly to $\mu$ as $n \to \infty$.

In this paper we consider similar kind of mappings $T_a$ as
Schnellmann and prove a corresponding result, that $X (a)$ is typical
for almost all parameters $a$, but with assumptions that are different
than those used by Schnellmann. The proof of this result is based on
the proof by Schnellmann, but contains a new ingredient, that makes it
possible to remove one of the more restrictive assumptions used by
Schnellmann. Unfortunately, the proof also needs some new assumptions
on the mapping, so the result of this paper is not a generalisation of
Schnellmann's result, but it extends the result to some families of
mappings that were not covered by Schnellmann.

Almost sure typicality for families of piecewise expanding interval
mappings has also been proved by Schnellmann in
\cite[Theorem~3.5]{schnellmann2}, but with different assumptions and
with a method different from that used in \cite{schnellmann}. Because
of assumption (III) used by Theorem~3.5 in \cite{schnellmann2}, it is
somewhat unclear in what generality the result holds. (See also
Remark~4.2 of \cite{schnellmann2}.) Schnellmann proves that the
assumption (III) is satisfied for families of tent mappings,
generalised $\beta$-transformations and Markov mappings
\cite[Section~3]{schnellmann2}, but for other classes, the validity of
assumption (III) is unclear. In this paper we will prove results for
families that are not contained in Schnellmann's papers
\cite{schnellmann, schnellmann2}.

\section{Statement of the Result}

We start by stating more precisely what kind of mappings we will work
with. As mentioned before, $I$ is an interval of parameters, and for
any $a \in I$ we have a mapping $T_a \colon [-1,1] \to [-1,1]$ that
satisfies the following.

\begin{itemize}
  \item There are smooth functions $b_0, \ldots, b_p$ with
    \[
    -1 = b_0 (a) < b_1 (a) < \cdots < b_p (a) = 1
    \]
    for each $a \in I$, such that the restriction of $T_a$ to $(b_i
    (a), b_{i+1} (a))$ can be extented to a smooth and monotone
    function on some open neighbourhood of $[b_i (a), b_{i+1}
      (a)]$.
  \item There are numbers $1 < \lambda \leq \Lambda < \infty$ such
    that
    \[
    \lambda \leq |T_a' (x)| \leq \Lambda
    \]
    holds for all $a \in I$ and all $x \in [0,1] \setminus \{b_0 (a),
    \ldots, b_p (a) \}$. There is a number $L$ such that $T_a'$ is
    Lipschitz continuous with constant $L$ on each $[b_i (a), b_{i+1}
      (a)]$.
  \item For $x \in [0,1]$, the mappings $a \mapsto T_a (x)$ and $a
    \mapsto T_a' (x)$ are piecewise $C^1$.
\end{itemize}

For each $a$ there is a $T_a$-invariant probability measure that is
absolutely continuous with respect to Lebesgue measure, and there are
at most finitely many such measures.  In this paper we will work with
mappings for which there is a unique absolutely continuous invariant
probability measure $\mu_a$. Let $K (a) = \supp \mu_a$. By Wong
\cite{wong} or Kowalski \cite{kowalski}, $K (a)$ consists of finitely
many intervals. Schnellmann assumed that the endpoints of these
intervals depend in a smooth way on $a$. We will assume that $K (a) =
[-1,1]$ for all $a$. (More precisely, we assume that
Assumption~\ref{ass:weaklycovering} below holds, which implies $K(a) =
[-1,1]$.) This is not too restrictive, since this can be achieved by
restricting the mapping to $K (a)$ and a change of
variable. Smoothness of the family will then be preserved if $K(a)$
changes in a smooth way with the parameter $a$.

For a piecewise continuous mapping $T \colon [-1,1] \to [-1, 1]$ we
let $\mathscr{P} (T)$ denote the partition of $[-1,1]$ into the
maximal open intervals on which $T$ is continuous. We also write
$\mathscr{P}_j (a) = \mathscr{P} (T_a^j)$ and denote by $-1 = b_{0,j}
(a) < b_{1,j} (a) < \cdots < b_{p_j,j} (a) = 1$ the points such that
$\mathscr{P}_j (a) = \{(b_{i,n} (a), b_{i+1,n} (a))\}$.

We shall study the orbit of a point $X(a)$, and we write
\[
\xi_j (a) = T_a^j (X(a)).
\]
We assume throughout that $X$ is a $C^1$ function. Hence, the function
$\xi_j$ is piecewise smooth. We denote by $\mathscr{Q}_j$ the set of
maximal open intervals on which $\xi_j$ is smooth, that is the maximal
open intervals such that $\xi_i (a) \not \in \{b_0 (a), \ldots, b_{p}
(a)\}$ holds for $0 \leq i < j$.

We state below four assumptions on the family of mappings and the
function $X$. These conditions are the same as those used by
Schnellmann in \cite{schnellmann}.

\begin{assumption} \label{ass:der}
  There is a constant $C_0$ such that for any $j \geq 1$ and any
  $\omega \in \mathscr{Q}_j$ we have
  \[
  C_0^{-1} \leq \biggl| \frac{\xi_j'(a)}{(T_a^j)' (X(a))} \biggr| \leq
  C_0
  \]
  for all $a \in \omega$.
\end{assumption}

It may be difficult to check if Assumption~\ref{ass:der}
hold. Schnellmann proved that the following assumption implies
Assumption~\ref{ass:der}. (See Lemma~2.1 in \cite{schnellmann}.)

\begin{assumption} \label{ass:der2}
  $X$ is $C^1$, and there is a $j_0$ such that
  \[
  \inf_{a} |\xi_{j_0}' (a)| \geq \frac{\sup_{a, x} |\partial_a T_a
    (x)|}{\lambda - 1} + 2L.
  \]
\end{assumption}

Let $\phi_a$ denote the density of the absolutely continuous measure
$\mu_a$. Our next assumption concerns this density.

\begin{assumption} \label{ass:density}
  There is a constant $C_1$ such that for all $a \in I$ we have
  \[
  C_1^{-1} \leq \phi_a \leq C_1, \qquad \mu_a\text{-a.e.}
  \]
\end{assumption}

As is mentioned by Schnellmann in \cite{schnellmann}, the next
assumption is more restrictive than the above assumptions. It is the
purpose of the paper to extend Schnellmann's result to families of
mappings that do not necessarily satisfy this assumption.

\begin{assumption} \label{ass:symb}
  There is a constant $C_2$ such that for all $a_1, a_2 \in I$, $a_1
  \leq a_2$, and $j \geq 1$, there is a mapping
  \[
  \mathscr{U}_{a_1,a_2,j} \colon \mathscr{P}_j (a_1) \to \mathscr{P}_j
  (a_2),
  \]
  such that $\omega \in \mathscr{P}_j (a_1)$ and
  $\mathscr{U}_{a_1,a_2,j} (\omega)$ have the same symbolic dynamics,
  their images lie close in the sence that
  \[
  d (T_{a_1}^j (\omega), T_{a_2}^j (\mathscr{U}_{a_1,a_2,j} (\omega)))
  \leq C_2 |a_1 - a_2|,
  \]
  and
  \[
  |T_{a_1}^j (\omega)| \leq C_2 |T_{a_2}^j (\mathscr{U}_{a_1,a_2,j}
  (\omega))|.
  \]
\end{assumption}

Schnellmann proved that if the Assumptions \ref{ass:der},
\ref{ass:density} and \ref{ass:symb} are satisfied, then the point
$X(a)$ is typical for $(T_a,\mu_a)$ for Lebesgue almost every $a \in
I$. Assumption~\ref{ass:symb} is rather restrictive, and it would be
desireable to remove this assumption. We shall do so, but in doing so,
we will need to introduce the following two conditions instead.

If for any $\omega \in \mathscr{P} (a)$ there is an $N = N(a)$ such that
\[
  [-1,1] \setminus \bigcup_{n=0}^N T_a^n (\omega)
\]
is a finite set, then the mapping $T$ is called weakly covering by
Liverani in \cite{liverani}. According to Lemma~4.2 of that paper,
weakly covering implies that the mapping has a unique absolutely
continuous invariant measure, with density that is bounded and bounded
away from zero, and it is possible to give an explicit lower bound on
the density. We shall need such lower bounds that are also stable
under certains perturbations of the mapping. In order to achieve this
we will need to do as follows.

For $\omega \in \mathscr{P} (a)$, let $\tilde{T}_a (\omega) = T_a
(\omega)$. Suppose that $\tilde{T}_a^k (\omega)$ is defined. Then we
define
\[
\tilde{T}_a^{k+1} (\omega) = \bigcup_{\substack{\omega' \in \mathcal{P}
  (a),\\ \omega' \subset \tilde{T}_a^k (\omega)}} T_a (\omega').
\]
Note that $\tilde{T}_a^k (\omega) \subset T_a^k (\omega)$.

Our next assumption will be the following assumption that is stronger
than weakly covering.

\begin{assumption} \label{ass:weaklycovering}
  For any $\omega \in \mathscr{P} (a)$ there is an $N = N(a)$ such
  that
  \[
    [-1,1] \setminus \bigcup_{n=0}^N \tilde{T}_a^n (\omega)
  \]
  is a finite set.
\end{assumption}


\begin{assumption} \label{ass:largeimage}
  There is a number $\delta > 0$ and an integer $m \geq 1$ such that
  \[
  (-\delta, \delta) \subset T_a^m (b_{i,m} (a), b_{i+1,m} (a))
  \]
  for all $i$ and $a \in I$, and
  \[
  \delta > \frac{1}{\inf |(T_a^m)'| - 1}
  \]
  for all $i \in I$.
\end{assumption}

We shall prove the following.

\begin{theorem} \label{the:typicalpoints}
  Suppose that the family $T_a$ and the point $X \in C^1$ satisfies
  the Assumptions~\ref{ass:der2}, \ref{ass:weaklycovering} and
  \ref{ass:largeimage}. Then the point $X(a)$ is typical with respect
  to $(T_a, \mu_a)$ for Lebesgue almost every $a \in I$.
\end{theorem}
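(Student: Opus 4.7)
The strategy is the one used by Schnellmann in \cite{schnellmann}. It suffices to show that for every $\psi$ in a countable dense family of Lipschitz functions on $[-1,1]$, the Birkhoff averages $\frac{1}{n}\sum_{k=0}^{n-1} \psi(\xi_k(a))$ converge to $\int \psi\, d\mu_a$ for Lebesgue a.e.\ $a \in I$. Writing $\bar{\psi}(a) = \int \psi\, d\mu_a$ (which is Lipschitz in $a$ thanks to the stable density bounds furnished by Assumptions~\ref{ass:weaklycovering} and \ref{ass:largeimage}) and $\tilde{\psi}_a(x) = \psi(x) - \bar{\psi}(a)$, the task reduces to proving a bound of the form
\[
\int_I \biggl( \sum_{k=0}^{n-1} \tilde{\psi}_a(\xi_k(a)) \biggr)^2 da = O\!\bigl(n (\log n)^{\gamma}\bigr),
\]
since such an estimate yields a.e.\ convergence via a Borel--Cantelli argument along the subsequence $n_j = 2^j$ together with a standard interpolation between consecutive dyadic indices.

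Expanding the square and localising to elements of $\mathscr{Q}_j$, the main task is to estimate, for each $i < j$ and $\omega \in \mathscr{Q}_j$, the correlation integral $\int_\omega \tilde{\psi}_a(\xi_i(a))\, \tilde{\psi}_a(\xi_j(a))\, da$. Assumption~\ref{ass:der2} implies Assumption~\ref{ass:der} by Lemma~2.1 of \cite{schnellmann}, so on each such $\omega$ the map $a \mapsto \xi_i(a)$ has distortion bounded uniformly in $i$ and $\omega$; the change of variables $b = \xi_i(a)$ transforms the correlation into
\[
\int_{\xi_i(\omega)} \tilde{\psi}_{a(b)}(b)\, \tilde{\psi}_{a(b)}\bigl( T_{a(b)}^{j-i}(b) \bigr) J(b)\, db,
\]
where $a(b)$ is the inverse branch and the weight $J(b)$ is comparable to $1/|(T_{a(b)}^i)'(X(a(b)))|$ and hence of order $\lambda^{-i}$. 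Since $|a'(b)| \le C_0\lambda^{-i}$, the parameter $a(b)$ varies by $O(\lambda^{-i})$ on $\xi_i(\omega)$, so the inner dynamics $T_{a(b)}^{j-i}$ is a small perturbation of the autonomous dynamics $T_{a_0}^{j-i}$ for any fixed $a_0 \in \omega$.

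To conclude, one wants to compare the above with the phase-space correlation $\int \tilde{\psi}_{a_0} \cdot (\tilde{\psi}_{a_0} \circ T_{a_0}^{j-i})\, d\mu_{a_0}$, which decays exponentially in $j-i$. Schnellmann performed this comparison using the symbolic conjugation supplied by Assumption~\ref{ass:symb}. The new ingredient here is a direct perturbation argument: Assumption~\ref{ass:weaklycovering} is strictly stronger than Liverani's weakly covering hypothesis, so Lemma~4.2 of \cite{liverani} yields an explicit positive lower bound on $\phi_a$, and Assumption~\ref{ass:largeimage} guarantees a branch of $T_a^m$ that covers a definite neighbourhood of the origin with enough expansion to absorb the distortion introduced under perturbation. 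Together these yield a uniform Lasota--Yorke inequality for the family $\{T_a\}_{a \in I}$, and hence uniform exponential decay of correlations, which converts the displayed integral into the desired bound. The main obstacle is precisely the quantitative stability step: one must show that Liverani's density lower bound and the associated spectral gap are stable under $O(\lambda^{-i})$ perturbations of $a$, which amounts to proving that the covering time $N(a)$ in Assumption~\ref{ass:weaklycovering} is locally uniform in $a$ and that the sets $\tilde{T}_a^n(\omega)$ vary Lipschitz-continuously with $a$. Assumption~\ref{ass:largeimage} is exactly what prevents loss of covering under such perturbations and makes the whole argument robust.
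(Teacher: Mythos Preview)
Your approach is genuinely different from the paper's, and the crucial step is not justified.

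The paper does not use an $L^2$/correlation-decay argument at all. Following \cite{schnellmann}, it works with indicator functions of rational intervals $B$, proves an $h$-th moment bound
\[
\int_{\tilde I} \chi_B(\xi_{j_1}(a))\cdots\chi_B(\xi_{j_h}(a))\,\mathrm{d}a \le (C|B|)^h
\]
for sparse tuples, and then invokes the large-deviation lemma of \cite{bjorklundschnellmann}. The replacement for Assumption~\ref{ass:symb} is not spectral stability: it is an explicit perturbation $E_s T_a$ (expanding each branch by a factor $s$ around its image, using Assumption~\ref{ass:largeimage} to keep the image inside $[-1,1]$) together with a proposition showing that the symbolic spaces $\Sigma(E_{1+\alpha t}T_{a_0+t})$ are \emph{nested} as $t$ increases. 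This manufactures, for a map just outside the family, exactly the monotonicity that Assumption~\ref{ass:symb} would have given, and Schnellmann's Lemma~3.1 then applies verbatim to that auxiliary map.

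The gap in your argument is the passage from ``$a(b)$ varies by $O(\lambda^{-i})$'' and ``uniform Lasota--Yorke/spectral gap'' to a bound on
\[
\int_{\xi_i(\omega)} \tilde\psi_{a(b)}(b)\,\tilde\psi_{a(b)}\bigl(T_{a(b)}^{\,j-i}(b)\bigr)\,J(b)\,\mathrm{d}b.
\]
This is not a correlation integral for any fixed map: the composition $T_{a(b)}^{\,j-i}$ is non-autonomous in $b$, and although $|a(b)-a_0|=O(\lambda^{-i})$, the orbits of $T_{a(b)}^{\,j-i}$ and $T_{a_0}^{\,j-i}$ can separate after $O(i)$ steps, which is far short of $j-i$. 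Uniform decay of correlations for each fixed $T_a$ (which is what a uniform Lasota--Yorke inequality and Keller--Liverani stability buy you) says nothing about such mixed integrals; controlling them is precisely the content of Assumption~\ref{ass:symb} in \cite{schnellmann} and of assumption~(III) in \cite{schnellmann2}, whose general validity the present paper explicitly flags as unclear. Your claim that Assumption~\ref{ass:largeimage} ``prevents loss of covering under such perturbations and makes the whole argument robust'' is the heart of the matter, but you have not supplied a mechanism; the paper's mechanism is the $E_s$-construction and the nested-subshift proposition, which are absent from your outline.

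A secondary point: you assert that $\bar\psi(a)=\int\psi\,\mathrm{d}\mu_a$ is Lipschitz in $a$. This is not immediate from the density bounds alone; it requires quantitative continuity of $a\mapsto\phi_a$ (e.g.\ in BV), which again is a Keller--Liverani type statement that would need to be established here. The paper sidesteps this entirely by working with $\limsup F_n(a)\le C|B|$ rather than with centred observables.
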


\section{Example}

Let $0 = b_0 < b_1 < b_2 < \cdots$ be an increasing and unbounded
sequence of real numbers. Suppose $T \colon [0,\infty) \to [0,1]$ is
  smooth on each of the intervals $(b_i,b_{i+1})$ and $|T'(x)| \geq
  \lambda_0 \geq 1$. We then define a family of mappings $T_a \colon
         [0,1] \to [0,1]$ by $T_a (x) = T (ax)$.

\begin{corollary} \label{cor:beta}
  Suppose that there exists a $\delta$ such that
  \[
  \Bigl(\frac{1 - \delta}{2}, \frac{1 + \delta}{2} \Bigr) \subset T
  ((b_i, b_{i+1}))
  \]
  for all $i$. Let
  \[
  a_0 = \lambda_0^{-1} (1 + \delta^{-1}).
  \]
  If $I \subset [a_0, \infty)$ is an interval such that for some $i$ we
    have
  \[
  (b_i / a, b_{i+1} / a) \subset \Bigl( \frac{1 - \delta}{2}, \frac{1
    + \delta}{2} \Bigr), \qquad \forall a \in I,
  \]
  and $T(b_i, b_{i+1}) = [0,1]$, then $X (a)$ is typical for a.e.\ $a
  \in I$ provided either 
  \begin{itemize}
    \item[i)] $X'(a) \geq 0$ and $T$ is piecewise increasing,
  \end{itemize}
  or
  \begin{itemize}
    \item[ii)] $X$ satisfies Assumption~\ref{ass:der2}.
  \end{itemize}
\end{corollary}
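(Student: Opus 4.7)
The strategy is to verify Assumptions~\ref{ass:der2}, \ref{ass:weaklycovering}, and \ref{ass:largeimage} of Theorem~\ref{the:typicalpoints} for $T_a(x) = T(ax)$ on $[0,1]$ and then apply the theorem. A preliminary computation gives $T_a'(x) = a\,T'(ax)$ and hence $|T_a'(x)| \geq a\lambda_0 \geq a_0\lambda_0 = 1 + \delta^{-1}$ for $a \in I$, so the family is uniformly expanding with rate $\lambda := 1 + \delta^{-1}$. The partition pieces of $T_a$ are of the form $(b_j/a,\,b_{j+1}/a) \cap [0,1]$, and by hypothesis every full piece $\omega$ satisfies $T_a(\omega) \supset J := (\tfrac{1-\delta}{2},\tfrac{1+\delta}{2})$; moreover the distinguished piece $\omega_\ast := (b_i/a,\,b_{i+1}/a)$ lies inside $J$ with $T_a(\omega_\ast) = [0,1]$.

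For Assumption~\ref{ass:weaklycovering}, every $\omega \in \mathscr{P}(T_a)$ has $\tilde T_a(\omega) = T_a(\omega) \supset J \supset \omega_\ast$, and therefore the definition of $\tilde T_a$ immediately yields $\tilde T_a^2(\omega) \supset T_a(\omega_\ast) = [0,1]$ up to finitely many points. For Assumption~\ref{ass:largeimage}, after an affine change of coordinates placing $J$ symmetrically, the expansion bound $\inf|(T_a^m)'| \geq \lambda^m$ gives the strict inequality $\tfrac{\delta}{2} > 1/(\lambda^m - 1)$ for every $m \geq 2$. The image clause---that every $m$-cylinder has $T_a^m$-image covering the target---is then obtained for suitable $m$ by noting that any cylinder whose itinerary visits $\omega_\ast$ has full image $[0,1]$, while cylinders whose itinerary avoids $\omega_\ast$ expand enough to cover $J$.

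For Assumption~\ref{ass:der2}, case (ii) assumes it directly. In case (i), the chain-rule recursion
\[
\xi_{j+1}'(a) = T'(a\xi_j(a))\bigl(\xi_j(a) + a\,\xi_j'(a)\bigr)
\]
has only non-negative summands, since $T' \geq \lambda_0 > 0$, $\xi_j(a) \in [0,1]$, and inductively $\xi_j'(a) \geq 0$ because $X'(a) \geq 0$. Setting $F_j := \xi_j'(a)/(T_a^j)'(X(a))$ and dividing through by $(T_a^{j+1})'(X(a))$ yields $F_{j+1} = F_j + \xi_j(a)/(a\,(T_a^j)'(X(a)))$, so
\[
F_j = X'(a) + \sum_{k=0}^{j-1}\frac{\xi_k(a)}{a\,(T_a^k)'(X(a))} \geq \frac{X(a)}{a},
\]
hence $\xi_j'(a) \geq \lambda^j X(a)/a$, which grows without bound in $j$. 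For $j_0$ sufficiently large this exceeds the right-hand side of Assumption~\ref{ass:der2}, verifying it, and then Theorem~\ref{the:typicalpoints} yields typicality.

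The main obstacle is the first clause of Assumption~\ref{ass:largeimage} for the rightmost ``partial'' partition piece $(b_k/a,\,1)$ with $b_k \leq a < b_{k+1}$, whose $T_a$-image $T(b_k, a)$ may be a proper subinterval of $T(b_k,b_{k+1})$ and fail to contain $J$. Iteration propagates this to partial cylinders in $\mathscr{P}_m$ whose $T_a^m$-images can likewise be small, so care is needed to choose $m$ so that every such itinerary visits $\omega_\ast$ (or to combine the expansion bound with the full-image property of $\omega_\ast$), after which the image clause holds uniformly in $a \in I$.
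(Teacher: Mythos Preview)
Your approach matches the paper's exactly: verify Assumptions~\ref{ass:der2}, \ref{ass:weaklycovering}, \ref{ass:largeimage} and invoke Theorem~\ref{the:typicalpoints}. The paper's own proof is far terser---it defers case~(i) of Assumption~\ref{ass:der2} entirely to Schnellmann's argument, and for Assumption~\ref{ass:largeimage} it checks only the inequality $\delta > 1/(\inf|T_a'|-1)$ (implicitly with $m=1$), without discussing the image clause at all.

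The gap you yourself flag---the rightmost partial branch $(b_k/a,1)$, whose image $T(b_k,a)$ need not contain $J$---is genuine, and you do not close it: the final paragraph only sketches a strategy (``choose $m$ so that every such itinerary visits $\omega_\ast$'') without carrying it out or arguing why such an $m$ exists uniformly in $a\in I$. Worse, the same issue already undermines your verification of Assumption~\ref{ass:weaklycovering}: you assert that \emph{every} $\omega\in\mathscr{P}(T_a)$ satisfies $T_a(\omega)\supset J\supset\omega_\ast$, but this fails for the partial piece; and if $T_a(\omega_r)$ contains no full branch then $\tilde T_a^2(\omega_r)=\emptyset$ by definition, so Assumption~\ref{ass:weaklycovering} would fail outright. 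Thus the obstacle you isolate for Assumption~\ref{ass:largeimage} is present, unacknowledged, in your argument for Assumption~\ref{ass:weaklycovering} as well. (The paper is equally silent here: it simply asserts that the hypotheses on $(b_i,b_{i+1})$ and $(b_i/a,b_{i+1}/a)$ imply Assumption~\ref{ass:weaklycovering}.) A minor further point: your lower bound $\xi_j'(a)\geq \lambda^j X(a)/a$ in case~(i) is vacuous when $X(a)=0$, so that case needs separate handling.
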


\begin{figure}
  \includegraphics{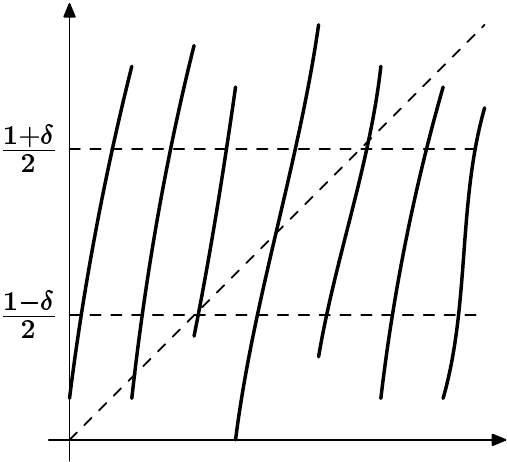}

  \caption{An example of a mapping $T$ for which the assumptions in
    Corollary~\ref{cor:beta} are satisfied for $T_a (x) = T(ax)$, for
    all parameters in some interval $[1,a_1]$, $a_1 > 1$. Here we have
    taken $\delta = 2/5$. Assumption~\ref{ass:largeimage} is then that
    $\inf |T_a'| > 7/2$.} \label{fig:example}
\end{figure}

Schnellmann considered also families of the form $T_a (x) = T(ax)$,
see Section~1.1 in \cite{schnellmann}, but used the assumption that $T
((b_i, b_{i+1}))$ is always of the form $[0,c)$, and that $T$ is
  piecewise expanding. Hence we can relax this assumption, but we have
  to add the other assumptions instead. Figure~\ref{fig:example} shows
  an example of a mapping satisfying the assumptions of the
  corollary.

\begin{proof}[Proof of Corollary \ref{cor:beta}]
  We shall use Theorem~\ref{the:typicalpoints}. Note that
  Theorem~\ref{the:typicalpoints} is for mappings on $[-1,1]$ and that
  here we are working on the interval $[0,1]$. This is just a matter
  of a change of variables.

  In case $X'(a) \geq 0$ and $T$ is piecewise increasing,
  Assumption~\ref{ass:der2} follows exactly as in Schnellmann's paper.

  The assumptions on $(b_i, b_{i+1})$ and $(b_i/a, b_{i+1}/a)$ in the
  corollary implies that Assumption~\ref{ass:weaklycovering}
  holds. Assumption~\ref{ass:largeimage} is satisfied, since the
  definition of $a_0$ implies that $\delta > 1/(\inf |T_a'| - 1)$ for
  $a \in I$. Hence, the conclusion follows from
  Theorem~\ref{the:typicalpoints}.
\end{proof}

\section{Outline of the proof}

The proof goes along the same line as the proof of Schellmann. Let $B$
be an interval and define
\[
F_n (a) = \frac{1}{n} \sum_{j=1}^n \chi_B (\xi_j (a)).
\]
It is sufficient to show that there exists a constant $C$ such that
\[
\limsup_{n\to\infty} F_n (a) \leq C |B|
\]
holds for any interval $B$ with rational endpoints. Using a lemma by
Bj\"ork\-lund and Schnellman \cite{bjorklundschnellmann} one shows that
it is sufficient to show that
\[
\int_{\tilde{I}} \chi_B (\xi_{j_1} (a)) \cdots \chi_B (\xi_{j_h} (a))
\, \mathrm{d}a \leq (C|B|)^h,
\]
for sufficiently sparse sequences $j_1, \ldots, j_h$, were $h$ is an
integer and $\tilde{I} \subset I$ is a small interval of parameters.
These estimates are stated in Proposition~\ref{pro:mainprop} below,
and are achieved by switching from an integral over the parameter
space to an integral over the phase space for a fixed mapping.

This is where the main difference with Schnellmann's paper
appears. Schnellmann switches the integral over the parameter space to
an integral over the phase space for a mapping $T_{a_J}$ in the
family. In order to be able to do so it is important that the orbit
structure of $T_{a_J}$ is rich enough to contain the orbits of $T_a$
for parameters $a$ that are close to $a_J$. This is where
Assumption~\ref{ass:symb} is necessary.

Here we will instead perturb the mapping $T_{a_J}$ to a mapping that is
close to $T_{a_J}$ but does not belong to the family. In this way we
can artificially make sure that a variant of Assumption~\ref{ass:symb}
is satisfied, even if the assumption itself is not satisfied for the
family. To prove that this is possible requires some new
assumptions. In effect, we can remove Assumption~\ref{ass:symb}, but
we have to replace it with the assumptions used in
Theorem~\ref{the:typicalpoints}.

In Section~\ref{sec:shifts} below, we will state and prove the results
that are necessary to get the desired properties of the above
mentioned perturbation of the mapping $T_{a_J}$. In
Section~\ref{sec:theproof}, we will prove
Theorem~\ref{the:typicalpoints}.

\section{Some preparations: On nested subshifts} \label{sec:shifts}

Let $D = [-1,1]$. We will consider piecewise expanding mappings on
$D$.

Given a vector of numbers $b = (b_0, b_1, \ldots, b_n)$ with $-1 = b_0
< b_1 < \cdots < b_n = 1$ and a vector of functions $f = (f_0, f_1,
\ldots, f_n)$ with $f_k \colon [b_{k-1}, b_k] \to D$, we define a
mapping $S \colon (b,f) \mapsto T$, where $T$ is a mapping $T \colon D
\to D$ such that $T (x) = f_k (x)$ for $x \in (b_{k-1}, b_k)$. We
leave $T$ undefined at the points $b_k$, and let $D_k = (b_{k-1},
b_k)$.

With the mapping $T$, we associate the shift space
\[
\Sigma (T) = \{\, i \in \{1, 2, \ldots, n\}^\mathbb{N} : \exists x \in
D, \ T^k (x) \in D_{i_k} \,\}.
\]

Suppose we have smooth functions $b_0, b_1, \ldots, b_n$ defined on
$[0, \varepsilon)$, such that
\[
-1 = b_0 (t) < b_1 (t) < \cdots < b_n (t) = 1
\]
and
\[
| b_k' (t)| \leq \zeta
\]
for all $t \in [0, \varepsilon)$, where $\zeta$ is a fixed number. We
  will use the notation $b_t = (b_0(t), b_1 (t), \ldots, b_n (t))$ and
  $D_k (t) = (b_{k-1} (t), b_k (t))$.

Let $f_1, f_2, \ldots, f_n$ be smooth mappings with $f_k \colon [0, P)
  \times D \to D$ such that for any $t \in [0, P)$ we have
\begin{align*}
  \lambda \leq | \partial_x f_k (t, x) | &\leq \Lambda, && \text{for
    all } x \in D_k (t), \\ | \partial_t f_k (t, x) |
  &\leq \eta, && \text{for all } x \in D_k (t),
\end{align*}
where $1 < \lambda \leq \Lambda < \infty$ and $\eta$ are fixed
numbers. We also assume that if $f_k (t_0, b_k (t_0)) \in \{-1, 1\}$
for some $t_0$, then $f_k (t, b_k (t)) = f_k (t_0, b_k(t_0))$ for all
$t$, and similarly if $f_k (t_0, b_{k-1} (t_0)) \in \{-1, 1\}$.

Actually, we only assume that the functions $f_k$ are defined
for $(t,x)$ such that $x \in D_k (t)$. We will consider $t$ as our
parameter and write $f_{k,t}$ for the function $x \mapsto f_k (t, x)$,
and we define the vector $f_t = (f_{1,t}, f_{2,t}, \ldots, f_{n,t})$.

We now define mappings $E_s$. Given a mapping $f_k (t, \cdot) \colon
D_k (t) \to [-1,1]$ and a number $s$ close to one, we define
\[
E_s (f_k (t,\cdot)) = \tilde{f}_k \colon D_k (t) \to [-1,1],
\]
where $\tilde{f}_k$ is defined by
\begin{equation} \label{eq:Edefinition}
  \tilde{f}_k (x) = \left\{ \begin{array}{ll} s f_k (t, x) & \text{if
    } \overline{f_k (t, D_k(t))} \subset (-1,1), \\ f_k (t, x) &
    \text{if } f_k (t, D_k(t)) = (-1,1), \\ \frac{s+1}{2} f_k (t, x) +
    \frac{s-1}{2} & \text{if } -1 \in \overline{f_k (t, D_k(t))}
    \subset [-1,1) ,\\ \frac{s+1}{2} f_k (t, x) - \frac{s-1}{2} &
      \text{if } 1 \in \overline{f_k (t, D_k(t))} \subset (-1,1].
  \end{array} \right.
\end{equation}
We shall only consider $E_s$ for $s \geq 1$. Hence $E_s$ maps
$f_k(t,\cdot)$ into $\tilde{f}_k$, so that the graph of $\tilde{f}_k$
is the graph of $f_k (t, \cdot)$, expanded in such a way that the
image stays in $(-1,1)$, see Figure~\ref{fig:expand}.

\begin{figure}
  
  \includegraphics{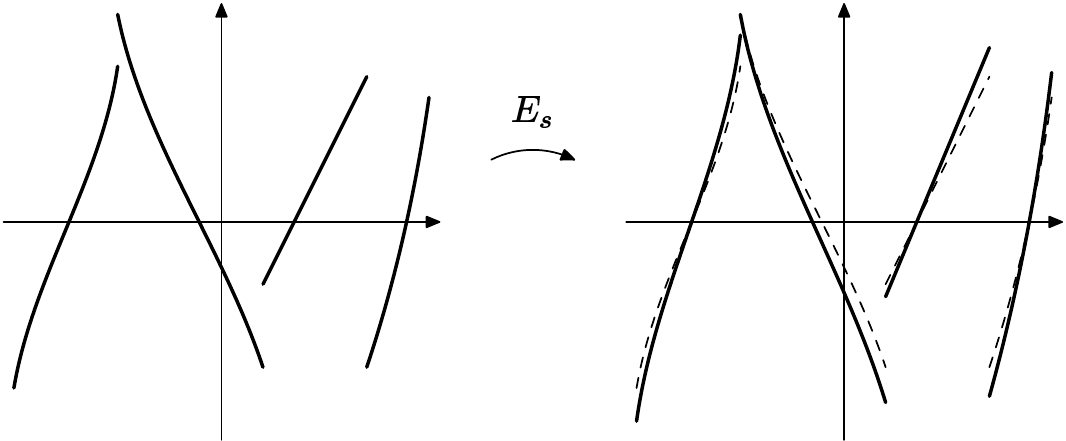}

  \caption{An illustration of the action of $E_s$ with $s =
    1.2$. The dashed lines show the original graph.} \label{fig:expand}
\end{figure}

When $f_t$ is a vector $f_t = (f_{1,t}, f_{2,t}, \ldots, f_{n,t})$ we
define $E_s (f_t)$ by
\[
E_s (f_t) = (E_s (f_{1,t}), E_s (f_{2,t}), \ldots, E_s (f_{n,t})).
\]

We now define the mappings $T_{t, s} \colon I \to I$ for any $t \in
[0, P)$ and $s$ close to one. Recall that $S \colon (b,f) \mapsto T$
  was defined in the beginning of this section and define
\[
T_{t,s} = S (b_t, E_s (f_t)).
\]
We also define the symbolic spaces $\Sigma_{t,s}$ by
\[
\Sigma_{t,s} = \Sigma (T_{t,s}).
\]

We are going to prove the following.

\begin{proposition} \label{pro:nestedspaces}
  Assume that $(-\delta, \delta) \subset f_k (t,
  D_k)$ for all $k$ and $t$, where $\delta$ is a number such that
  \[
  \delta > \frac{1}{\lambda - 1}.
  \]
  Then there are numbers $\alpha_0$ and $d$, depending only on
  $\lambda$, $\Lambda$ and $\zeta$, such that
  \[
  \left. \begin{array}{r} \alpha > \alpha_0 \\ 0 \leq t_0 < t_1 <
    d/\alpha \end{array} \right\} \quad \Rightarrow \quad \Sigma_{t_0,
    1 + \alpha t_0} \subset \Sigma_{t_1, 1 + \alpha t_1}.
  \]
  Moreover, we have
  \[
  T_{t_0, 1 + \alpha t_0}^j (\omega) \subset T_{t_1, 1 + \alpha t_1}^j
  (\mathscr{U}_{t_0,t_1,j} (\omega))
  \]
  for any $\omega \in \mathscr{P}_j (T_{t_0,1 + \alpha t_0})$, where
  $\mathscr{U}_{t_0,t_1,j}$ is as in Assumption~\ref{ass:symb}.
\end{proposition}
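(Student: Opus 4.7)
The plan is to define $\mathscr{U}_{t_0,t_1,j}\colon\mathscr{P}_j(T_{t_0,1+\alpha t_0})\to\mathscr{P}_j(T_{t_1,1+\alpha t_1})$ as the bijection matching symbolic addresses and to prove the image containment by induction on $j$, strengthened to a margin statement: there is an $M_j>0$ such that every ``moving'' endpoint of $T_{t_0,1+\alpha t_0}^j(\omega)$ lies at distance at least $M_j$ inside $T_{t_1,1+\alpha t_1}^j(\mathscr{U}_{t_0,t_1,j}(\omega))$. Since a $j$-cylinder is nonempty iff its image is, the inclusion $\Sigma_{t_0,1+\alpha t_0}\subset\Sigma_{t_1,1+\alpha t_1}$ then follows at once.

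Writing $g_{k,t}=E_{1+\alpha t}(f_{k,t})$ for the branch map and $J_k(t)=T_{t,1+\alpha t}^k(\omega_t)$ for the image of the corresponding cylinder, one has $J_k(t)=g_{i_{k-1},t}(J_{k-1}(t))$ and $J_k(t)\subset D_{i_k}(t)$. For the base case $j=1$, $J_1(t)=g_{i_0,t}(D_{i_0}(t))$ is the full branch image; in case (1) of \eqref{eq:Edefinition} its left endpoint is $(1+\alpha t)f_{i_0}(t,b_{i_0-1}(t))$ with $t$-derivative
\[
  \alpha f_{i_0}(t,b_{i_0-1}(t))+(1+\alpha t)\bigl[\partial_t f_{i_0}+\partial_x f_{i_0}\cdot b'_{i_0-1}\bigr].
\]
The first term has absolute value at least $\alpha\delta$ and the correct sign to push the endpoint outward, because $(-\delta,\delta)\subset f_{i_0}(t,D_{i_0}(t))$ forces $|f_{i_0}(t,b_{i_0-1}(t))|\geq\delta$; the second is bounded by $(1+\alpha t)(\eta+\Lambda\zeta)$. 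Choosing $\alpha_0$ and $d$ so that $(1+d)(\eta+\Lambda\zeta)<\alpha_0\delta$, the first term dominates on $[0,d/\alpha]$ and integration yields $M_1\geq c\alpha(t_1-t_0)$ with $c/\delta\to 1$ as $\alpha\to\infty$. The remaining cases of \eqref{eq:Edefinition} are analogous: in case (2) one has $J_1(t)=(-1,1)$ and no margin is required; in cases (3)--(4) the pinned endpoint $\pm 1$ needs no margin and the free endpoint moves outward at a rate of order $\alpha$.

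For the inductive step assume $J_{j-1}(t_0)\subset J_{j-1}(t_1)$ with margin $M_{j-1}$. Since $J_{j-1}(t_0)\subset J_{j-1}(t_1)\subset D_{i_{j-1}}(t_1)$, the map $g_{i_{j-1},t_1}$ is defined on $J_{j-1}(t_0)$, and for the left endpoints $a_0,a_1$ of $J_{j-1}(t_0),J_{j-1}(t_1)$ I decompose
\[
  g_{i_{j-1},t_1}(a_1)-g_{i_{j-1},t_0}(a_0)=\bigl[g_{i_{j-1},t_1}(a_1)-g_{i_{j-1},t_1}(a_0)\bigr]+\bigl[g_{i_{j-1},t_1}(a_0)-g_{i_{j-1},t_0}(a_0)\bigr].
\]
Using $|\partial_x g_{i_{j-1},t}|\geq\lambda$ together with $a_1\leq a_0-M_{j-1}$ bounds the first bracket by $-\lambda M_{j-1}$, and $|\partial_t g_{i_{j-1},t}|\leq\alpha+O(\eta)$ bounds the second by $C\alpha(t_1-t_0)$ with $C$ arbitrarily close to $1$ for $\alpha$ large, giving the recursion
\[
  M_j\geq\lambda M_{j-1}-C\alpha(t_1-t_0).
\]
This is $\geq M_{j-1}$ provided $M_{j-1}\geq C\alpha(t_1-t_0)/(\lambda-1)$, and combined with $M_1\gtrsim\delta\alpha(t_1-t_0)$ from the base case the hypothesis $\delta>1/(\lambda-1)$ is precisely what makes the induction self-sustaining.

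The main difficulty is to run this argument uniformly across the four cases of \eqref{eq:Edefinition}, both orientations of each $f_k$, and the possibility that an endpoint of $J_{j-1}(t_0)$ coincides with $\pm 1$ or touches $\partial D_{i_{j-1}}(t_1)$. The key observation that pinned endpoints at $\pm 1$ are fixed by every $g_{k,t}$ and hence need no margin reduces every case to essentially one estimate per moving endpoint, and a symmetric estimate on the right endpoints together with a uniform choice of $\alpha_0$ and $d$ (arising from making $C\alpha(t_1-t_0)/(\lambda-1)$ smaller than $(1-\epsilon)\delta\alpha(t_1-t_0)$ for $\alpha t_1\leq d$) then produces the claimed constants.
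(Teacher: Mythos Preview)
Your forward induction on cylinder images is a genuinely different route from the paper's argument, and the underlying idea is sound, but there is a real gap in the inductive step. You write ``Since $J_{j-1}(t_0)\subset J_{j-1}(t_1)\subset D_{i_{j-1}}(t_1)$, the map $g_{i_{j-1},t_1}$ is defined on $J_{j-1}(t_0)$''. Here $J_{j-1}(t)$ is the image under $T_{t,1+\alpha t}^{j-1}$ of the $(j-1)$-cylinder with address $(i_0,\ldots,i_{j-2})$; this interval is \emph{not} contained in $D_{i_{j-1}}(t)$ in general, and your decomposition of $g_{i_{j-1},t_1}(a_1)-g_{i_{j-1},t_0}(a_0)$ breaks down whenever $a_0\notin D_{i_{j-1}}(t_1)$. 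What actually passes from level $j-1$ to level $j$ is the intersection $J_{j-1}(t)\cap D_{i_{j-1}}(t)$, so each endpoint of $T_t^j(\omega)$ is either (A) the $g$-image of an endpoint of $J_{j-1}(t)$, or (B) the $g$-image of a partition point $b_k(t)$, i.e.\ an endpoint of the full branch image. Your recursion $M_j\geq\lambda M_{j-1}-C\alpha(t_1-t_0)$ only treats the AA case; the remaining cases (AB, BA, BB) must be handled separately. They do work --- a type~B endpoint is a full branch endpoint and hence has margin $\geq M_1$ by your base-case computation, and one checks that BA cannot occur once $M_{j-1}\gg\zeta(t_1-t_0)$ --- but this case analysis is exactly the ``main difficulty'' you defer to the closing paragraph without carrying out.

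The paper sidesteps this entirely by running the induction \emph{backwards}. Fix an orbit $x_0,\ldots,x_m$ realising the symbol sequence at $t_0$, set $y_m(t)\equiv x_m$, and pull back via $y_{k-1}(t)=(g_{i_{k-1},t})^{-1}(y_k(t))$. One only needs $y_k(t)$ to remain in the \emph{full} branch image $J_{i_{k-1}}(t)=g_{i_{k-1},t}(D_{i_{k-1}}(t))$, never in a sub-cylinder, so no intersection with $D_{i_{j-1}}$ ever appears. Differentiating the relation $g_{i_{k-1},t}(y_{k-1}(t))=y_k(t)$ gives $|y_k'(t)|\leq K$ with $K=(\lambda-1)^{-1}(\alpha+(1+\alpha\varepsilon)\eta)$, uniformly in $k$, while your own base-case estimate shows the endpoints of $J_{i_{k-1}}(t)$ move outward at rate at least $\alpha\delta-(1+\alpha\varepsilon)(\eta+\Lambda\zeta)$. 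The hypothesis $\delta>1/(\lambda-1)$ makes the second rate exceed $K$ for large $\alpha$, so $y_k(t)$ can never escape. This gives the shift inclusion directly, and since $y_m(t)=x_m$ for every $x_m\in T_{t_0,1+\alpha t_0}^m(\omega)$, the image containment follows as well. Both arguments reach the same inequality $\alpha(\delta-\tfrac{1}{\lambda-1})>(1+\alpha\varepsilon)(\tfrac{\lambda\eta}{\lambda-1}+\Lambda\zeta)$, but the backward point-wise version avoids the fresh-cut bookkeeping that your forward set-wise version requires.
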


The assumption that $(- \delta, \delta) \subset f_k (t, D_k (t))$ for
any $k$ is, as it is stated here, only here for convenience in the
proof. One can make small adjustments in the definition of $T_{t,s}$
so that the conclusion of Proposition~\ref{pro:nestedspaces} holds
with various variations of this assumption. It is not important that
$0 \in f_k (t, D_k (t))$, but it seems that some kind of large image
property is needed.

\begin{proof}
  Fix $t_0$ and $t_1$ in $[0, \varepsilon) \subset [0, P)$ such that
      $t_0 < t_1$. Write $s (t) = 1 + \alpha t$.

  It sufficies to prove the following. For any $m \in \mathbb{N}$, $i
  \in \Sigma_{t_0, s (t_0)}$, and $x_0, x_1, \ldots, x_m$ such that
  \begin{equation} \label{eq:assumptiononx0}
    x_k = T_{t_0, s (t_0)} (x_{k-1}) = T_{t_0, s (t_0)}^k (x_0) \in
    D_{i_k} (t_0), \quad k = 1, 2, \ldots, m,
  \end{equation}
  there exist $y_0, y_1, \ldots, y_m$ such that
  \[
  y_k = T_{t_1, s (t_1)} (y_{k-1}) = T_{t_1, s (t_1)}^k (y_0) \in D_{i_k}
  (t_1), \quad k = 1, 2, \ldots, m.
  \]

  Let $J_j (t) = T_{t,s(t)} (D_j (t)) = E_{s (t)} (f_{j,t} (D_j
  (t)))$. We could try to simply define $y_k$ by
  \begin{align*}
    y_m &= x_m, \\ y_{k-1} &= (E_{s (t_1)} (f_{i_{k-1}, t_1}))^{-1}
    (y_k),
  \end{align*}
  but a possible obstruction is that $y_k$ might not lie in
  $J_{i_{k-1}} (t_1)$, and in that case $(E_{s (t_1)} (f_{i_{k-1},
    t_1}))^{-1} (x_k)$ is not defined. We will show that our
  assumptions imply that this obstruction is not present.

  We let
  \begin{align*}
    y_m (t) &= x_m, \\ y_{k-1} (t) &= (E_{s (t)} (f_{i_{k-1},
      t}))^{-1} (y_k (t)),
  \end{align*}
  and proceed by induction. Cleary, $y_0 (t), y_1 (t), \ldots, y_m (t)$
  are defined for $t = t_0$, and since the intervals $J_j (t)$ are
  open, it follows that $y_0 (t), y_1 (t), \ldots, y_m (t)$ are
  defined in some neighbourhood around $t = t_0$. We want to estimate
  the size of that neighbourhood, and for future reference, we denote
  it by $L$.

  Clearly, $y_m (t) = x_m$ is defined for any $t$, and $|\partial_t
  y_m (t)| = 0 < K$, where $K$ is a positive constant to be determined
  later. Assume that for some $k$ the point $y_{k}$ is defined and
  that $|\partial_t y_k (t)| \leq K$. Then, since $y_{k-1} (t) =
  (E_{s(t)} (f_{i_{k-1}, t}))^{-1} (y_k (t))$, we have
  \[
  E_{s(t)} (f_{i_{k-1}}) (t, y_{k-1} (t)) = y_k (t).
  \]

  We will now consider the different cases in the definition of $E_s$
  in \eqref{eq:Edefinition}. Consider the first case, in which we have
  \[
  E_{s(t)} (f_{i_{k-1}}) (t, y_{k-1} (t)) = s(t) f_{i_{k-1}} (t,
  y_{k-1} (t)) = y_k (t).
  \]
  After a differentiation, we get
  \begin{multline*}
    y_k' (t) = s' (t) f_{i_{k-1}, t} (y_{k-1} (t)) + s(t) \partial_t
    f_{i_{k-1}} (t, y_{k-1} (t)) \\ + s(t) \partial_x f_{i_{k-1}} (t,
    y_{k-1} (t)) y_{k-1}' (t).
  \end{multline*}
  Solving for $y_{k-1}' (t)$ and using that $s'(t) = \alpha$, we get
  \[
    y_{k-1}' (t) = \frac{( y_k' (t) - \alpha f_{i_{k-1}, t}
      (y_{k-1} (t)) - s(t) \partial_t f_{i_{k-1}} (t, y_{k-1}
      (t))}{s(t) \partial_x f_{i_{k-1}} (t, y_{k-1} (t))}.
  \]
  Hence
  \begin{equation} \label{eq:diffyestimate}
  |y_{k-1}' (t)| \leq \lambda^{-1} (K + \alpha + (1 + \alpha
  \varepsilon) \eta).
  \end{equation}

  Similar calculations for the other cases in \eqref{eq:Edefinition}
  yields the same estimate. Hence, in all cases we will have
  \eqref{eq:diffyestimate}. Since $\lambda > 1$, it is clear that
  this implies that
  \[
  |y_{k-1}' (t)| \leq K
  \]
  if $K$ satisfies $K \geq (\lambda - 1)^{-1} ( \alpha + (1 + \alpha
  \varepsilon) \eta)$. Therefore, put $K = (\lambda - 1)^{-1} (
  \alpha + (1 + \alpha \varepsilon) \eta)$.

  We have now proved that for any $t \in L$, the points $y_k (t)$ are
  defined and $|y_k' (t)| \leq K$, but we still do not know how big
  $L$ is.

  What can prevent the neighbourhood $L$ to be large is that for some
  $k$ and $t$, the point $y_k (t)$ is not in the interval $J_{i_{k-1}}
  (t)$. However, as $t$ varies, $y_k (t)$ moves at a speed not larger
  than $K$, and at the same time, as $t$ grows, the interval
  $J_{i_{k-1}} (t)$ expands and we would like to know that the
  endpoints of $J_{i_{k-1}} (t)$ moves with a speed greater than $K$,
  so that $y_k (t)$ cannot escape out from the interval $J_{i_{k-1}}
  (t)$ as $t$ grows. To show that this is the case we need to consider
  the four cases in \eqref{eq:Edefinition}.

  Let $c (t) = E_{s(t)} (f_{i_{k-1}}) (t, b_{i_{k-1}} (t))$ be one of
  the endpoints of $J_{i_{k-1}} (t)$. (The other endpoint can be
  treated in the same way, but we will not do so.) In the case that
  $f_{i_{k-1}} (t, b_{i_{k-1}} (t)) \in \{-1,1\}$, the interval
  $J_{i_{k-1}} (t)$ will be maximal at one of it's end points, and the
  point $y_k (t)$ can therefore not escape from $J_{i_{k-1}} (t)$ at
  that endpoint. It is therefore sufficient to only consider the other
  cases.

  In the case that
  \[
  c (t) = E_{s(t)} (f_{i_{k-1}}) (t, b_{i_{k-1}} (t)) = s(t)
  f_{i_{k-1}} (t, b_{i_{k-1}} (t))
  \]
  we have that
  \begin{multline*}
    c' (t) = s' (t) f_{i_{k-1}} (t, b_{i_{k-1}} (t)) + s (t)
    \partial_t f_{i_{k-1}} (t, b_{i_{k-1}} (t)) \\ + s(t) \partial_x
    f_{i_{k-1}} (t, b_{i_{k-1}} (t)) b_{i_{k-1}}' (t).
  \end{multline*}
  We estimate each of the three terms separately:
  \begin{align*}
    |s' (t) f_{i_{k-1}} (t, b_{i_{k-1}} (t))| & \geq \alpha \delta,
    \\ | s(t) \partial_t f_{i_{k-1}} (t, b_{i_{k-1}} (t))| & \leq (1 +
    \alpha \varepsilon) \eta, \\ | s(t) \partial_x f_{i_{k-1}} (t,
    b_{i_{k-1}} (t)) |b_{i_{k-1}}' (t)| & \leq (1 + \alpha
    \varepsilon) \Lambda \zeta.
  \end{align*}
  This yields
  \[
  |c'(t)| \geq \alpha \delta - (1 + \alpha \varepsilon) (\eta +
  \Lambda \zeta)
  \]
  and we get the same estimate in the cases
  \[
  c (t) = E_{s(t)} (f_{i_{k-1}}) (t, b_{i_{k-1}} (t)) = \frac{s(t)+1}{2}
  f_{i_{k-1}} (t, b_{i_{k-1}} (t)) \pm \frac{s(t)-1}{2}.
  \]

  Now, $|c'(t)| > K \geq |y_k'(t)|$ if
  \begin{align*}
  \alpha \delta - (1 + \alpha \varepsilon) (\eta + \Lambda \zeta) &>
  \frac{1}{\lambda - 1} (\alpha + (1 + \alpha \varepsilon) \eta)
  \\ &\Leftrightarrow \\ \alpha \Bigl( \delta - \frac{1}{\lambda - 1}
  \Bigr) & > (1 + \alpha \varepsilon) \Bigl( \frac{\lambda
    \eta}{\lambda - 1} + \Lambda \zeta \Bigr).
  \end{align*}
  From this, it appears that if
  \[
  \delta > \frac{1}{\lambda - 1},
  \]
  then there exists a number $\alpha_0$ such that $|c' (t)| > |y_k'
  (t)|$ for $\alpha \geq \alpha_0$ provided that $\alpha \varepsilon
  \leq 1$.

  Moreover, since we require that $T_t,s ([-1,1]) \subset [-1,1]$, the
  mappings $E_s$ will only be defined for $1 \leq s \leq s_0$, where
  $s_0 > 1$ only depends on $f_t$. Hence we also require that $1 +
  \alpha \varepsilon \leq s_0$. In conclusion, the conclusion of the
  proposition holds provided
  \[
  \varepsilon < \frac{\min \{ 1, s_0 - 1\}}{\alpha}. \qedhere
  \]
\end{proof}

\section{Proof of Theorem~\ref{the:typicalpoints}} \label{sec:theproof}

The proof is a modification of Schnellmann's proof of his result. We
shall therefore make use of several of the lemmata that are found in
Schnellmann's paper \cite{schnellmann}. For the proof of these lemmata
we refer to Schnellmann's paper. However, we shall repeat here the
steps in the proof of Schnellmann's result that coincide with our
proof of Theorem~\ref{the:typicalpoints}.

\subsection{Densities of Invariant Measures}

We first note that the Assumption~\ref{ass:weaklycovering} implies
that $T_a$ is weakly covering, and this implies that the density
$\phi_a$ of $\mu_a$ satisfies
\[
\phi_a (x) \geq \gamma_a > 0,
\]
for some $\gamma_a$. This is Lemma~4.3 in Liverani's paper
\cite{liverani}. In fact, the proof of that lemma gives
\begin{equation} \label{eq:densitybound}
  \gamma_a \geq 2^{-2} \lVert T_a \rVert_\infty^{-N}.
\end{equation}

It is also clear from Rychlik's paper \cite{rychlik} that $\phi_a$ is
of bounded variation for all $a$, and that there is a uniform bound on
the variation of $\phi_a$. In particular, the functions $\phi_a$ are
uniformly bounded. Hence there exists a constant $C_0$ such that
\[
\gamma_a \leq \phi_a (x) \leq C_0, \qquad \forall x,
\]
holds for all $a \in I$.

We also note that if $T_a$ satisfies
Assumption~\ref{ass:weaklycovering}, then so does $E_s T_a$ for any $s
\geq 1$, where $E_s T_a$ is defined in Section~\ref{sec:shifts}. This
follows immediately since $T (P) \subset (E_s T_a) (P)$ for any $P \in
\mathscr{P} (a) = \mathscr{P} (E_s T_a)$. Note however, that we do not
necessarily have $T^n (P) \subset (E_s T_a)^n (P)$ for $n > 1$. This
is the only reason that we use Assumption~\ref{ass:weaklycovering}
instead of just assuming that $T$ is weakly covering.

By Lemma~4.3 in \cite{liverani}, it then follows that $E_s
T_a$ has an invariant probability measure $\mu_{a,s}$ that is
equivalent to Lebesgue measure.

Let $\phi_{a,s}$ denote the density of $\mu_{a,s}$. Using the explicit
bound on $\gamma_a$ from below \eqref{eq:densitybound}, we can
conclude that
\begin{equation} \label{eq:densityestimate}
  \frac{1}{2} \gamma_a \leq \phi_{a,s} (x) \leq C_0, \qquad \forall x,
\end{equation}
holds for all $a \in I$ and $1 \leq s \leq s_0$.

It would be useful if \eqref{eq:densityestimate} would give a bound
from below that is uniform in $a$, since then
Assumption~\ref{ass:density} would be satisfied, but this need not be
the case. However, we can take a subset $I'$ of $I$ of almost full
measure and a $\gamma > 0$ such that
\begin{equation} \label{eq:densityestimate2}
  \gamma \leq \phi_{a,s} (x) \leq \gamma^{-1}, \qquad \forall x,
\end{equation}
holds for all $a \in I'$ and $1 \leq s \leq s_0$. We then have
Assumption~\ref{ass:density} for $I'$ replaced by $I$, and by working
with $I'$ instead of $I$, we can perform the proof in the same way as
if \eqref{eq:densityestimate2} had been satisfied for all $a \in I$.

The possibility to work with $I'$ instead of $I$ was remarked already
by Schnellmann, see  Remark~2 in \cite{schnellmann}.

Instead of working with $I'$ we shall however work with $I$ and assume
that \eqref{eq:densityestimate2} holds for all $a \in I$; This is
mostly a typographical difference, and one can throughout the proof
exchange $I$ with $I'$. Subintervals $\tilde{I}$ of $I$ should be
replaced with $\tilde{I} \cap I'$, et\,c.

\subsection{Switching from the parameter space to the phase space} \label{sec:parameter-phase}

Assumption~\ref{ass:der} was used by Schnellmann to switch from
integrals over the parameter space to integrals over the phase
space. We shall also need to do so, but we will need that
Assumption~\ref{ass:der} holds also for the mappings $E_s T_a$ with $s = 1
+ (a - a_0) \alpha$. We can achieve this as follows.

Assumption~\ref{ass:der2} implies Assumption~\ref{ass:der}, as
previously mentioned. Hence, $|\xi_j' (a)|$ grows axponentially fast
with $j$. If $T_a^j (X(a))$ is typical, then so is $X (a)$. Therefore,
working with $\tilde{X} (a) = T_a^j (X(a))$ instead of $X(a)$, we can
achive that $|\xi_1' (a)|$ is as large as we desire. Since
\[
\frac{\sup_{a, x} | \partial_a (E_s T_a (x))|}{\lambda - 1} + 2L
\]
is bounded, Assumption~\ref{ass:der2} will be satisfied for $E_s T_a$,
and hence also Assumption~\ref{ass:der}.

\subsection{Typical points}

We let $\mathscr{B}$ denote the set of open sub intervals of $[0,1]$
with rational endpoints. The strategy of the proof is to show that
there is a constant $C$ such that for any $B \in \mathscr{B}$, the
function
\[
F_n (a) = \frac{1}{n} \sum_{j=1}^n \chi_B (\xi_j (a))
\]
satisfies
\begin{equation} \label{eq:boundeddensity}
\limsup_{n \to \infty} F_n (a) \leq C |B|, \qquad \text{for a.e. } a
\in I.
\end{equation}
This is sufficient to prove Theorem~\ref{the:typicalpoints}, since
then there is a set of parameters of full measure for which $\limsup
F_n (a) \leq C |B|$ holds for every $B$, and this implies that any
weak accumulation point of
\[
\frac{1}{n} \sum_{j=1}^n \delta_{\xi_j (a)}
\]
is an absolutely continuous invariant measure with density bounded by
$C$.

At this point, we shall assume that the constant $m$ appearing in
Assumption~\ref{ass:largeimage} is equal to 1. If, instead of $T_a$,
we consider the family $T_a^m$, then, as we shall see below, we can
prove that $X(a)$ is typical with respect to $(T_a^m, \mu_a)$ for
almost all $a$ by proving \eqref{eq:boundeddensity} for $T_a^m$
instead of $T_a$. This result then holds for all of the points $X(a),
T_a (X(a)), \ldots, T_a^{m-1} (X(a))$, so that
\eqref{eq:boundeddensity} holds for $T_a$ as well. Hence, we may
assume that $m=1$.

To prove \eqref{eq:boundeddensity}, Schnellmann used a lemma by
Bj\"orklund and Schnellmann \cite{bjorklundschnellmann}: It is
sufficient to show that for all large integers $h$ there is a constant
$C_1$ and an integer $n_{h,B}$, growing at most exponentially fast
with $h$, such that
\[
\int_I F_n (a)^h \, \mathrm{d}a \leq C_1 (C|B|)^h,
\]
for all $n \geq n_{h,B}$.

We can write
\begin{equation} \label{eq:Fnintegral}
  \int_I F_n (a)^h \, \mathrm{d}a = \sum_{1 \leq j_1, \ldots, j_h \leq
    n} \frac{1}{n^h} \int_I \chi_B (\xi_{j_1} (a)) \cdots \chi_B
  (\xi_{j_h} (a)) \, \mathrm{d}a.
\end{equation}
The idea is then to compare the integral over the parameters with
integrals over the phase space $[0,1]$ with respect to $\mu_a$, and
use mixing to achieve the desired estimate. Indeed, for a fixed $a$,
there is a set $A$ and a number $k$ such that $T_a^k \colon A \to A$
is mixing of any order (see \cite{wagner}). We therefore have
\begin{equation} \label{eq:mixing}
  \int_A \chi_B (T_a^{kj_1} (x)) \cdots \chi_B (T_a^{k j_h}) \,
  \mathrm{d} \mu_a (x) \to \mu_a (B)^h \mu_a (A) \leq (\gamma^{-1} |B|)^h,
\end{equation}
as $j_i \to \infty$ and $\min_{i \neq l} |j_i - j_l| \to \infty$.

By comparing the integrals in \eqref{eq:Fnintegral} and
\eqref{eq:mixing}, we shall prove the following.

\begin{proposition} \label{pro:mainprop}
  The set $I$ of parameters can be covered by countably many intervals
  $\tilde{I} \subset I$, such that for each $\tilde{I}$ there is a
  constant $C$ and numbers $n_{h,B}$, growing at most exponentially
  fast with $h$, such that
  \[
  \int_{\tilde{I}} \chi_B (\xi_{j_1}(a)) \cdots \chi_B (\xi_{j_h} (a))
  \, \mathrm{d}a \leq (C |B|)^h,
  \]
  for all $(j_1, \ldots, j_h)$ with $\sqrt{n} \leq j_1 < j_2 < \cdots
  < j_h < n -\sqrt{n}$ and $j_i - j_{i-1} \geq \sqrt{n}$, were $n \geq
  n_{h,B}$.
\end{proposition}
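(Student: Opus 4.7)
The plan is to adapt Schnellmann's parameter-to-phase-space estimate, replacing the family symbolic comparison of Assumption~\ref{ass:symb} by the reference perturbation supplied by Proposition~\ref{pro:nestedspaces}. Fix $\alpha > \alpha_0$ from Proposition~\ref{pro:nestedspaces} and cover $I$ by countably many closed intervals $\tilde I = [a_J, a^\ast]$ of length less than $d/\alpha$. For each such $\tilde I$ put $s^\ast = 1 + \alpha(a^\ast - a_J)$ and $\tilde T = E_{s^\ast} T_{a^\ast}$. By Section~\ref{sec:parameter-phase}, $\tilde T$ inherits Assumption~\ref{ass:weaklycovering}, its invariant density satisfies \eqref{eq:densityestimate2} with constants uniform on $\tilde I$, and (after replacing $X$ by $T_a^{j_0} X$ for a fixed $j_0$) Assumption~\ref{ass:der} holds uniformly for the whole family $\{E_{1+\alpha(a-a_J)}T_a : a \in \tilde I\}$. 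Proposition~\ref{pro:nestedspaces}, applied with $t = a-a_J$ and $s(t) = 1+\alpha t$, then yields, for each $a \in \tilde I$ and each $j \geq 1$, a map $\mathscr{U}_{a,j}$ sending cylinders of $E_{1+\alpha(a-a_J)}T_a$ into cylinders of $\tilde T$ with the same symbolic address and with $T_a^j(\omega) \subset \tilde T^j(\mathscr{U}_{a,j}(\omega))$ (using also the elementary inclusion $\Sigma(T_a) \subset \Sigma(E_{1+\alpha(a-a_J)}T_a)$). This plays the role of Assumption~\ref{ass:symb}, with $\tilde T$ as the comparison mapping.

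\textbf{From parameter to phase space.} Decompose
\[
\int_{\tilde I} \prod_{i=1}^h \chi_B(\xi_{j_i}(a))\, \mathrm{d}a = \sum_{\omega \in \mathscr{Q}_{j_h}} \int_\omega \prod_{i=1}^h \chi_B(\xi_{j_i}(a))\, \mathrm{d}a.
\]
On each $\omega$ the function $\xi_{j_h}$ is smooth and monotone, so change variables $a \mapsto y = \xi_{j_h}(a)$. By Assumption~\ref{ass:der} and bounded distortion of $T_a^{j_h}$ on $\omega$, the Jacobian is comparable to $|\omega|/|T_a^{j_h}(\omega)|$. Group the cylinders $\omega$ according to their image $\tilde\omega = \mathscr{U}_{a,j_h}(\omega)$, invoke the containment $T_a^{j_h}(\omega) \subset \tilde T^{j_h}(\tilde\omega)$ together with bounded distortion of $\tilde T^{j_h}$ on $\tilde\omega$, and use the density bound \eqref{eq:densityestimate2} to replace Lebesgue measure by $\mu_{\tilde T}$. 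The total integral is then majorised by a constant multiple of
\[
\int_{[-1,1]} \prod_{i=1}^h \chi_B\bigl(\tilde T^{k_i} y\bigr)\, \mathrm{d}\mu_{\tilde T}(y),
\]
for times $k_i$ satisfying $k_{i+1} - k_i \geq \sqrt n$, inherited from the gap condition on the $j_i$.

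\textbf{Mixing and conclusion.} Since $\tilde T$ is piecewise expanding with a BV invariant density bounded above and away from zero, Lasota--Yorke--Rychlik theory gives its transfer operator a spectral gap, hence exponential decay of correlations for BV observables such as $\chi_B$. Iterating the two-point decay across the $h-1$ gaps, each of length at least $\sqrt n$, yields
\[
\int \prod_{i=1}^h \chi_B \circ \tilde T^{k_i}\, \mathrm{d}\mu_{\tilde T} \leq \mu_{\tilde T}(B)^h + h C_\sharp \theta^{\sqrt n}
\]
for some $\theta \in (0,1)$ and $C_\sharp > 0$. With $\mu_{\tilde T}(B) \leq \gamma^{-1}|B|$ and $n_{h,B}$ chosen so that $h C_\sharp \theta^{\sqrt{n_{h,B}}} \leq |B|^h$ (which only requires $n_{h,B}$ exponentially large in $h$), the required bound $(C|B|)^h$ follows.

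\textbf{Main obstacle.} The pivotal step is the passage from parameter integrals to phase-space integrals in the second paragraph. In Schnellmann's proof this is achieved through Assumption~\ref{ass:symb}, which identifies cylinders across the family; here the family need not admit such an identification, and Proposition~\ref{pro:nestedspaces} supplies one instead relative to the perturbed mapping $\tilde T$, at the price of having to establish uniform quantitative estimates (distortion, density bounds, Assumption~\ref{ass:der}, exponential mixing) for $\tilde T$ rather than for a member of the family. Assumptions~\ref{ass:weaklycovering} and~\ref{ass:largeimage}, combined with the stability arguments of Section~\ref{sec:parameter-phase}, are designed to ensure exactly these uniform estimates.
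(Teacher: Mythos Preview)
Your sketch diverges from the paper's argument in two essential places, and the second paragraph hides a real gap.

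\textbf{Localisation.} The paper does \emph{not} work with a single perturbed reference map for the whole of $\tilde I$. After choosing $\tilde I$, it covers $\tilde I$ by intervals $J=[a_0,a_1]$ of length about $1/n$ (shrinking with $n$), and for \emph{each such} $J$ it takes the reference map $T = E_{s(a_1)}T_{a_1}$. This scale is what makes the parameter-to-phase-space bridge (Lemma~\ref{lem:q-to-one}) work: it guarantees that the map $\mathscr U_J\colon \mathscr Q_n|J \to \mathscr P(T^n)$ is at most $q$-to-one, that $d(\xi_j(\omega), T^j(\mathscr U_J(\omega)))\le C_2/n$, and that $|\omega|\le C_2|\mathscr U_J(\omega)|$. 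On a fixed-length interval $\tilde I$ none of these quantitative comparisons are available; Proposition~\ref{pro:nestedspaces} gives only the symbolic inclusion and the image inclusion, not a bound on the number of parameter cylinders mapped to the same phase cylinder, nor a $1/n$ drift bound.

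\textbf{The transfer step.} Even granting the symbolic/image inclusions you invoke, the majorisation
\[
\sum_{\omega\in\mathscr Q_{j_h}} \int_\omega \prod_{i=1}^h \chi_B(\xi_{j_i}(a))\,\mathrm da \;\lesssim\; \int_{[-1,1]} \prod_{i=1}^h \chi_B(\tilde T^{k_i}y)\,\mathrm d\mu_{\tilde T}(y)
\]
is unjustified. After the change of variables $y=\xi_{j_h}(a)$, the remaining factors are $\chi_B(\xi_{j_i}(\xi_{j_h}^{-1}(y)))$; this is \emph{not} $\chi_B(\tilde T^{k_i}y)$ for any $k_i$, because the parameter curve $a\mapsto(\xi_{j_1}(a),\dots,\xi_{j_h}(a))$ is not a piece of any orbit of $\tilde T$. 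The inclusion $\xi_{j_i}(\omega)\subset \tilde T^{j_i}(\tilde\omega_{j_i})$ (for the level-$j_i$ cylinder $\tilde\omega_{j_i}\supset\tilde\omega$) only tells you that $\tilde T^{j_i}(\tilde\omega_{j_i})$ meets $B$, a condition on an interval of macroscopic size, which gives no control. The paper avoids this obstacle entirely: using Lemma~\ref{lem:q-to-one} it passes from $\Omega_J$ to a family $\Omega$ of phase-space cylinders for the fixed map $T$, and then bounds $|\cup\Omega|$ by an iterative scheme $\Omega_0\supset\Omega_1\supset\cdots\supset\Omega_h$ together with the exceptional-set Lemma~\ref{lem:exceptional}. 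Exponential mixing is never invoked; the estimate \eqref{eq:hit3B2}, which relies only on the density bound \eqref{eq:densityestimate2} and bounded distortion, does the work at each step.

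In short: you need the $1/n$ localisation and the $q$-to-one lemma to get from parameter space to phase space at all, and once there the paper's recursive $\Omega_i$ argument (not spectral-gap mixing) yields the bound.
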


Schnellmann proved the above proposition using also
Assumption~\ref{ass:symb}.

Let us now see how Proposition~\ref{pro:mainprop} finishes the proof
of Theorem~\ref{the:typicalpoints}. The number of $h$-tuples
$(j_1,\ldots,j_h)$ not satisfying the assumptions of
Proposition~\ref{pro:mainprop}, that is, the number of increasing
$h$-tuples satisfying $j_1 < \sqrt{n}$, $j_h > n - \sqrt{n}$ or
$j_{k+1} - j_k < \sqrt n$ for some $k$, is at most $2h n^{h - 1/2}$.
Hence, by \eqref{eq:Fnintegral} and Proposition~\ref{pro:mainprop} we
get
\[
\int_{\tilde{I}} F_n (a)^h \, \mathrm{d} a \leq (C|B|)^h + \frac{2
  h}{\sqrt{n}} | \tilde{I} | \leq 2 (C|B|)^h
\]
if $n \geq n_{h,B}$ and
\[
n \geq \frac{4 h^2 |\tilde{I}|^2}{(C|B|)^{2h}}.
\]
This implies that \eqref{eq:boundeddensity} holds for almost every $a$
in $\tilde{I}$. Hence, it remains only to prove
Proposition~\ref{pro:mainprop}.

\subsection{Proof of Proposition~\ref{pro:mainprop}}

We shall first state the following lemma. Suppose $J = [a_0, a_1]
\subset \tilde{I}$. Consider $t \mapsto T_{a_0+t}$ and take $\alpha$
according to Proposition~\ref{pro:nestedspaces}. Note that we can
choose $\alpha$ to be independent of $J$.

Put $s(a) = 1 + (a-a_0)\alpha$. In the following lemma, we will
consider the mappings $E_{s(a)} T_a$. For a partition $\mathscr{Q}$,
we denote by $\mathscr{Q} | J$ the partition $\mathscr{Q}$ restricted
to the interval $J$, that is the set of non-empty intersections
$\omega \cap J$, with $\omega \in \mathscr{Q}$.

\begin{lemma} \label{lem:q-to-one}
  Assume that Assumption~\ref{ass:der2} holds. There is an integer $q$
  and a constant $C_2$ such that if $J \subset \tilde{I}$ is of length
  about $1/n$, then there is a mapping
  \[
  \mathscr{U}_J \colon \mathscr{Q}_n | J \to \mathscr{P} ((E_{s (a_1)}
  T_{a_1})^n)
  \]
  that is at most $q$-to-one, and we have
  \[
  d (\xi_j (\omega), (E_{s (a_1)} T_{a_1})^j (\mathscr{U}_J (\omega))) \leq
  C_2 /n, \qquad 0 \leq j \leq n - \sqrt{n}
  \]
  and
  \[
  |\omega| \leq C_2 |\mathscr{U}_J (\omega)|.
  \]
\end{lemma}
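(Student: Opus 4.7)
The plan is to use Proposition~\ref{pro:nestedspaces} as the bridge between parameter space and the perturbed phase dynamics. With $t_0 = a - a_0$ for $a \in \omega$, $t_1 = a_1 - a_0$ and $s(t) = 1 + \alpha t$, fix $\alpha$ above the threshold $\alpha_0$ supplied by the proposition; since $|J| \approx 1/n$, the hypotheses of the proposition are met for all sufficiently large $n$. For any $\omega \in \mathscr{Q}_n | J$ the parameters $a \in \omega$ share a common symbolic itinerary $(i_0, \ldots, i_{n-1})$, and Proposition~\ref{pro:nestedspaces} shows this itinerary is realised by an orbit of $E_{s(a_1)} T_{a_1}$. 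There is a unique $P \in \mathscr{P}((E_{s(a_1)} T_{a_1})^n)$ carrying it, and I set $\mathscr{U}_J(\omega) = P$.

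The distance estimate is extracted from the proof of Proposition~\ref{pro:nestedspaces} rather than its bare conclusion. That proof constructs, for each $a \in \omega$, points $y_0(a), \ldots, y_{n-1}(a)$ with $y_0(a) \in \mathscr{U}_J(\omega)$ and $|y_k(a) - \xi_k(a)| \leq K|a_1 - a| \leq K/n$, where $K$ is the Lipschitz constant built there. Hence every point of $\xi_j(\omega)$ lies within $K/n$ of $(E_{s(a_1)} T_{a_1})^j(\mathscr{U}_J(\omega))$. For the reverse direction I use that $\mathscr{U}_J(\omega)$ is an $n$-cylinder of the uniformly expanding map $E_{s(a_1)} T_{a_1}$, so bounded distortion gives $|(E_{s(a_1)} T_{a_1})^j(\mathscr{U}_J(\omega))| \lesssim \lambda^{-(n-j)} \leq \lambda^{-\sqrt{n}}$ for $j \leq n - \sqrt{n}$, which is asymptotically much smaller than $1/n$. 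Combining the two directions yields the required Hausdorff bound.

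The measure comparison $|\omega| \leq C_2 |\mathscr{U}_J(\omega)|$ follows by chaining three standard inequalities. First, Assumption~\ref{ass:der} (which, by the discussion of Section~\ref{sec:parameter-phase}, applies to the perturbed family) gives $|\xi_n(\omega)| \geq C_0^{-1} |((E_{s(a)} T_a)^n)'(X(a))| \, |\omega|$. Second, the containment $\xi_n(\omega) \subset (E_{s(a_1)} T_{a_1})^n(\mathscr{U}_J(\omega))$ from Proposition~\ref{pro:nestedspaces}, combined with bounded distortion on the $n$-cylinder $\mathscr{U}_J(\omega)$, gives $|\xi_n(\omega)| \leq C_D |((E_{s(a_1)} T_{a_1})^n)'(y)| \, |\mathscr{U}_J(\omega)|$ for any $y \in \mathscr{U}_J(\omega)$. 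Third, the two orbits stay within $K/n$ of each other for $n$ steps and both $T$ and $T'$ are Lipschitz in the phase and parameter variables, so the two derivative products are comparable up to a uniform multiplicative constant. Chaining these closes the estimate.

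The at-most-$q$-to-one property is the main obstacle. Two distinct elements $\omega, \omega' \in \mathscr{Q}_n | J$ with the same itinerary must be separated by a parameter subinterval in which $\xi_j$, for some $j < n$, crosses a discontinuity line $b_k(a)$ and later returns to $D_{i_j}(a)$. I would bound the number of such recurrences inductively in $j$, using that on every piece of $\mathscr{Q}_j | J$ the function $\xi_j$ is smooth and monotone with derivative bounded below, and that by Assumption~\ref{ass:der2} this lower bound grows exponentially in $j$ once $j \geq j_0$. The expected output is an integer $q$ depending only on $p$, $\lambda$, $\Lambda$, $j_0$ and the constants from Proposition~\ref{pro:nestedspaces}, and independent of $n$ and $J$; the technical difficulty is making the inductive count uniform on parameter intervals of length about $1/n$.
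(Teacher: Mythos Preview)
Your proposal has a genuine gap: the at-most-$q$-to-one property is left as a heuristic sketch, and the inductive count you describe is never carried out. This is precisely the part of the argument that the paper does \emph{not} attempt to prove from scratch. Instead, the paper's proof is a short reduction to Schnellmann's Lemma~3.1. One sets $S_a := E_{s(a)} T_a$ with $s(a) = 1 + \alpha(a - a_0)$ for $a \in J$ and verifies that this auxiliary one-parameter family satisfies all of Schnellmann's hypotheses: Assumption~\ref{ass:der2} for $S_a$ was checked in Section~\ref{sec:parameter-phase}, Assumption~\ref{ass:density} follows from \eqref{eq:densityestimate2}, and --- crucially --- Assumption~\ref{ass:symb} is exactly the content of Proposition~\ref{pro:nestedspaces}, which gives the nesting $\Sigma(S_a) \subset \Sigma(S_{a'})$ for $a \leq a'$ together with the required image containment. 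Lemma~3.1 of \cite{schnellmann}, applied to the family $S_a$, then yields the present statement (with $S_{a_1} = E_{s(a_1)} T_{a_1}$), including the $q$-to-one bound, the distance estimate, and the length comparison, all of which are already proved there.

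The structural point you are missing is that Proposition~\ref{pro:nestedspaces} is not meant to be unpacked orbit-by-orbit as you do; it is designed to manufacture a family for which Assumption~\ref{ass:symb} holds, thereby placing the problem back inside Schnellmann's existing framework. Your direct route, if completed, would amount to reproving Lemma~3.1 of \cite{schnellmann} in this special setting. Note also a small mismatch in your use of Proposition~\ref{pro:nestedspaces}: the points $x_k$ appearing in its proof are orbit points of $T_{t_0, s(t_0)} = S_a$, not of the unperturbed $T_a$, so the bound $|y_k - x_k| \leq K|a_1 - a|$ you quote is a comparison of $S_a$-orbits with $S_{a_1}$-orbits. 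This is consistent with the paper's approach of applying Schnellmann's lemma to the family $S_a$, but it means your distance and length estimates, written directly for the $T_a$-orbit $\xi_j$, do not follow from the proposition quite as stated.
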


This lemma is the main difference compared to Schnellmann's
paper. Schnellmann proved a corresponding lemma (Lemma~3.1 in his
paper) using also Assumption~\ref{ass:symb}.

\begin{proof}
  In fact, Lemma~\ref{lem:q-to-one} follows from Lemma~3.1 in
  \cite{schnellmann} combined with Proposition~\ref{pro:nestedspaces}
  as follows.  Let $S_a = E_{s(a)} T_a$ with $s(a) = 1 +
  (a-a_0)\alpha$. Then Assumption~\ref{ass:der2} holds for the family
  $S_a$, as noted in Section~\ref{sec:parameter-phase}.

  According to \eqref{eq:densityestimate2},
  Assumption~\ref{ass:density} is satisfied for the family $S_a$.

  By Proposition~\ref{pro:nestedspaces}, the family $S_a$ satisfies
  the assumptions of Lemma~3.1 of \cite{schnellmann}. The statement of
  that Lemma is exactly what is to be proved.
\end{proof}

From now on, we will work a lot with $E_{s(a_1)} T_{a_1}$. Therefore,
to simplify the notation, we put $T = E_{s (a_1)} T_{a_1}$.

Take $t_0$ such that $2^{1/t_0} \leq \sqrt{\lambda}$ and let
\[
\delta (a) = \min \{ \, |\omega| : \omega \in \mathscr{P}_{t_0} (a) \,
\}.
\]
It is clear that $\delta (a)$ depends continuously on $a$, so $\delta$
is locally bounded away from zero.

We show that there is a constant $C$ and $n_{h,B}$ such that if $J
\subset \tilde{I}$ is an interval of length between $1/(2n)$ and
$1/n$, then
\begin{equation} \label{eq:Jintegral}
  \int_J \chi_B (\xi_{j_1} (a)) \cdots \chi_B (\xi_{j_h} (a)) \,
  \mathrm{d}a \leq |J| (C|B|)^h,
\end{equation}
for $n \geq n_{h,B}$ and $(j_1, \ldots, j_h)$ satisfying the
assumptions of Proposition~\ref{pro:mainprop}. By covering $\tilde{I}$
with such intervals $J$, this implies the statement of
Proposition~\ref{pro:mainprop}.

Let
\[
\Omega_J = \{ \, \omega \in \mathscr{Q}_n | J : \xi_{j_i} (\omega)
\cap B \neq \emptyset, \ \forall 1 \leq i \leq h \, \},
\]
We show that
\begin{equation} \label{eq:Omegaestimate}
  |\cup \Omega_J| \leq |J| (C|B|)^h,
\end{equation}
which implies \eqref{eq:Jintegral}, since
\[
\{\, a \in J : \chi_B (\xi_{j_1} (a)) \cdots \chi_B (\xi_{j_h} (a)) =
1 \, \} \subset \cup \Omega_J.
\]

Let $J_X$ be the interval of length $|X(J)| + 3 C_2 /n$ and concentric
with $X(J)$. Put
\[
\Omega = \{\, \omega \in \mathscr{P}_n (a_1)|J_X : T^{j_i}
(\omega) \cap 2B \neq \emptyset,\ 1 \leq i \leq h \,\}.
\]

Now, using Lemma~\ref{lem:q-to-one}, we have
\[
\cup \mathscr{U}_J (\Omega_J) \subset \cup \Omega
\]
and
\[
|\cup \Omega_J| \leq C_2 |\cup \mathscr{U}_J (\Omega_J)| \leq C_2
|\cup \Omega|.
\]

Take $\tau$ such that $\Lambda^{-\tau} \leq |B| / 2$ and assume that
$n \geq \tau^2$. Then $j_i - j_{i-1} \geq \tau$. Let $\Omega_0 =
\{J_X\}$ and
\[
\Omega_i = \{\, \omega \in \mathscr{P}_{j_i + \tau} (a_1) | \cup
\Omega_{i-1} : T^{j_i} (\omega) \cap 2B \neq \emptyset \,\}, \qquad 1 \leq
i \leq h.
\]
We then have $\cup \Omega \subset \cup \Omega_h$.

By the way $\tau$ was chosen, we have $|T^{j_i} (\omega)| \leq |B|/2$
for all $\omega \in \mathscr{P}_{j_i + \tau} (a_1)$. Therefore
\[
\cup \Omega_i \subset \{ \, x \in \Omega_{i-1} : T^{j_i} (x) \in 3 B
\,\}.
\]

We let $\phi$ denote the density of the absolutely continuous
invariant measure of $T$. By the invariance of the density $\phi$, we
have
\[
\phi (x) = \sum_{T^k (y) = x} \frac{\phi (y)}{|(T^k)' (y)|}.
\]
Hence, using \eqref{eq:densityestimate2}, which states that $\gamma
\leq \phi \leq \gamma^{-1}$, we get
\[
\sum_{T^k (y) = x} \frac{1}{|(T^k)' (y)|} \leq
\gamma^{-2} \qquad \text{for a.e. } x.
\]
It therefore follows that
\begin{align*}
  |T^j (\{x \in \omega : T^{j+k} (x) \in 3B\} )| &= \int_{3B}
  \sum_{\substack{x \in \omega,\\ T^{j+k} (x) = y}} \biggl|
  \frac{(T^j)' (x)}{(T^{j+k})' (x)} \biggr| \, \mathrm{d}y \nonumber
  \\ &\leq \int_{3B} \sum_{T^k (x) = y} \frac{1}{|( T^k)' (x)|} \,
  \mathrm{d}y \leq 3 \gamma^{-2} |B|.
\end{align*}

There is a constant $C_3$ such that
\[
\biggl| \frac{(T^j)' (x_1)}{(T^j)' (x_2)}
\biggr| \leq C_3
\]
holds whenever $x_1,x_2 \in \omega \in \mathscr{P}_j (a_1)$. This is
just a standard distortion estimate. See Lemma~4.1 in Schnellmann's
paper for a more general result.

If $|T^j (\omega)| \geq \delta_0$, then we get
\begin{align} \label{eq:hit3B2}
  |\{\, x \in \omega : T^{j+k} (x) \in 3B \,\}| &\leq C_3
  \frac{|T^j (\{\, x \in \omega : T^{j+k} (x) \in 3B \,
    \})|}{|T^j (\omega)|} |\omega| \nonumber \\ &\leq \frac{3
    \gamma^{-2} C_3}{\delta} |B| |\omega| = C_4 |B| |\omega|.
\end{align}

This gives us enough control over those $\omega$ that satisfy $|T^j
(\omega)| \geq \delta_0$. We will also need some control of those
$\omega$ that do not satisfy this requirement, and we introduce the
following set of exceptional cylinders
\begin{multline*}
  F_i = \{\, \omega \in \mathscr{P}_{j_{i+1}} (a_1)| \cup \Omega_i : \not
  \exists \tilde{\omega} \in \mathscr{P}_{j_i + k} (a_1) | \cup \Omega_i,
  \ \tau \leq k \leq j_{i+1} - j_i, \\ \text{ s.t. } \tilde{\omega}
  \supset \omega \text{ and } |T^{j_i + k} (\tilde{\omega})|
  \geq \delta_0 \,\}.
\end{multline*}

The following lemma from Schnellmann's paper gives us enough control
of the sets $F_i$. The proof is in \cite{schnellmann}.

\begin{lemma} \label{lem:exceptional}
  There are numbers $n_{h,B}$, growing at most exponentially in $h$,
  such that
  \[
  |F_i| \leq \frac{(C_4 |B|)^h | \cup \Omega_0|}{h},
  \]
  for all $0 \leq i \leq h -1$ and $n \geq n_{h,B}$.
\end{lemma}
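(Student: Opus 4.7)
The plan is to show that the measure of $F_i$ decays exponentially in $j_{i+1}-j_i$, and then to choose $n_{h,B}$ large enough (polynomial in $h$ and in $\log(1/|B|)$) to match the required bound $(C_4|B|)^h|\cup\Omega_0|/h$. The mechanism is that a cylinder whose image stays below $\delta_0$ for many iterates is forced, by Assumption~\ref{ass:largeimage}, to shed most of its mass at every step into full-branch children whose images exceed $\delta_0$ and are therefore excluded from the "stuck" family.

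First I would organise the obstruction by level. For $\tau \leq k \leq j_{i+1}-j_i$, let
\[
G_{i,k} = \{\,\omega \in \mathscr{P}_{j_i+k}(a_1)|\cup\Omega_i : |T^{j_i+l}(\omega^{(l)})| < \delta_0 \text{ for all } \tau \leq l \leq k\,\},
\]
where $\omega^{(l)}$ denotes the unique cylinder of $\mathscr{P}_{j_i+l}(a_1)$ containing $\omega$. By the definition of $F_i$ we have $F_i \subset G_{i,j_{i+1}-j_i}$, so it suffices to bound $|\cup G_{i,j_{i+1}-j_i}|$.

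The core geometric observation uses Assumption~\ref{ass:largeimage} with $m=1$. If $\tilde{\omega}_k \in G_{i,k}$, then $T^{j_i+k}(\tilde{\omega}_k)$ is an interval of length less than $\delta_0$, so it meets at most $\lceil \delta_0/\delta(a_1)\rceil+2$ consecutive partition elements $D_l \in \mathscr{P}(a_1)$. Any $D_l$ completely contained in $T^{j_i+k}(\tilde{\omega}_k)$ produces a child whose image is $T(D_l) \supset (-\delta,\delta)$, hence of length $\geq 2\delta$. Choosing $\delta_0 < 2\delta$, these full-branch children are excluded from $G_{i,k+1}$, so only the two boundary partial intersections can contribute. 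Combining this with the bounded distortion estimate used in \eqref{eq:hit3B2} and the choice $\lambda^{t_0/2} \geq 2$ — which ensures that an image in the "narrow" regime (one or two partition elements met, no mass shed) cannot stay below $\delta_0$ over $t_0$ consecutive iterates — yields a contraction
\[
|\cup G_{i,k+t_0}| \leq \kappa\, |\cup G_{i,k}|
\]
for some $\kappa \in (0,1)$ depending only on $\lambda$, $\Lambda$, $\delta$, $\delta(a_1)$ and the distortion constant $C_3$.

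Iterating this contraction yields $|F_i|\leq |\cup G_{i,j_{i+1}-j_i}|\leq \kappa^{(j_{i+1}-j_i-\tau)/t_0}|\cup\Omega_0|$, which since $j_{i+1}-j_i \geq \sqrt{n}$ is exponentially small in $\sqrt n$. It then suffices to pick $n_{h,B}$ so that $\kappa^{(\sqrt n-\tau)/t_0} \leq (C_4|B|)^h/h$; solving gives $n_{h,B} \leq C\bigl(h\log(1/(C_4|B|))+\log h\bigr)^2$, which is polynomial in $h$ and so in particular at most exponential, as required. The main obstacle in this plan is establishing the uniform contraction factor $\kappa$; the delicate case is when $T^{j_i+k}(\tilde{\omega}_k)$ meets only one or two partition elements and no full-branch mass is shed, and it is precisely there that one must combine Assumption~\ref{ass:largeimage} with the choice of $t_0$ to force the image out of the $\delta_0$-regime within a bounded number of $t_0$-blocks.
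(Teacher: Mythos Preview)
The paper does not give its own proof of this lemma; it simply refers to Schnellmann \cite{schnellmann}, because the statement is a purely phase-space estimate for the single map $T=E_{s(a_1)}T_{a_1}$ and Schnellmann's original argument applies verbatim. That argument uses \emph{only} the expansion constant $\lambda$ and the choice of $t_0$ with $2\leq\lambda^{t_0/2}$, together with $\delta_0=\delta(a_1)=\min\{|\omega|:\omega\in\mathscr{P}_{t_0}(a_1)\}$. The mechanism is: if an ancestor $\omega^{(k)}\in G_{i,k}$ has $|T^{j_i+k}(\omega^{(k)})|<\delta_0$, then that image meets at most two elements of $\mathscr{P}_{t_0}$, so $\omega^{(k)}$ has at most two children in $\mathscr{P}_{j_i+k+t_0}$; meanwhile each surviving child pulls back by a factor $\lambda^{-t_0}$. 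The competition $2$ versus $\lambda^{t_0}$ gives the exponential decay $2^l\lambda^{-lt_0}\leq\lambda^{-lt_0/2}$, and one finishes exactly as you do in your last paragraph.

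Your route is different: you invoke Assumption~\ref{ass:largeimage} to shed full-branch children at the level of $\mathscr{P}_1$ rather than $\mathscr{P}_{t_0}$. This is an unnecessary detour, since Assumption~\ref{ass:largeimage} plays no role in Schnellmann's lemma, and it creates a real gap in your sketch. Shedding full branches at the $\mathscr{P}_1$ level leaves at most two boundary children \emph{per single step}, so the doubling--versus--expansion balance becomes $2$ versus $\lambda$, which fails when $\lambda\leq 2$. You try to rescue this with the claim that in the ``narrow regime'' the image ``cannot stay below $\delta_0$ over $t_0$ consecutive iterates,'' but that is not what the inequality $\lambda^{t_0/2}\geq 2$ says: a very short image that meets only one monotonicity interval grows by a factor $\lambda^{t_0}$ over $t_0$ steps, yet can still be far below $\delta_0$. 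The role of $t_0$ is not to force escape from the $\delta_0$-regime, but to guarantee that a short image meets at most two cells of the \emph{coarser} partition $\mathscr{P}_{t_0}$, so that the branching rate $2$ is beaten by the $t_0$-step expansion $\lambda^{t_0}$. Once you switch the splitting from $\mathscr{P}_1$ to $\mathscr{P}_{t_0}$, Assumption~\ref{ass:largeimage} drops out, the contraction $\kappa=2\lambda^{-t_0}\leq\lambda^{-t_0/2}$ is immediate, and your final paragraph goes through.
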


If $\omega \in \Omega_i \setminus F_i$, then for some $k$ we have
$|T^{j_i + k} (\omega)| \geq \delta_0$ and we get by \eqref{eq:hit3B2}
that
\[
| \{x \in \omega : T^{j_{i + 1}} (x) \in 3B \}| \leq C_4 |B|
|\omega|.
\]
This implies with Lemma~\ref{lem:exceptional} that
\[
|\cup \Omega_{i+1}| \leq C_4 |B| |\cup (\Omega_i \setminus F_i) | +
|F_i| \leq C_4 |B| |\omega_i| + \frac{(C_4 |B|)^h |\cup \Omega_0|}{h},
\]
provided $n \geq n_{h,B}$. Hence
\[
|\cup \Omega| \leq |\cup \Omega_h| \leq (C_4 |B|)^h |\cup \Omega_0| +
  |h \frac{(C_4 B|)^h |\cup \Omega_0|}{h} \leq 2 (C_4 |B|)^h | \cup
  \Omega_0|.
\]

Since
\[
|J_X| = |X(J)| + \frac{3C_2}{n} \leq (6C_2 + \sup_{a \in I} |X'(a)|)
|J|
\]
and $\Omega_0 = \{J_X\}$, we have
\[
|\cup \Omega_0| \leq (6C_2 + \sup_{a \in I} |X'(a)|) |J|.
\]
By Lemma~\ref{lem:q-to-one} we have $|\cup \Omega_J| \leq q C_2 |\cup
\Omega|$. Therefore
\[
|\cup \Omega_J| \leq (C|B|)^h |J|
\]
which implies \eqref{eq:Omegaestimate} and finishes the proof of
Theorem~\ref{the:typicalpoints}.

\end{document}